\newif\ifpictures
\numberwithin{equation}{section}
\newtheorem{thm}{Theorem}
\newtheorem{example}[thm]{Example}
\newtheorem{prop}[thm]{Proposition}
\newtheorem{cor}[thm]{Corollary}
\newtheorem{definition}[thm]{Definition}
\numberwithin{thm}{section}
\newcounter{FNC}[page]
\def\newfootnote#1{{\addtocounter{FNC}{2}$^\fnsymbol{FNC}$%
     \let\thefootnote\relax\footnotetext{$^\fnsymbol{FNC}$#1}}}
\newcommand{\N}{\mathbb{N}}
\newcommand{\R}{\mathbb{R}}
\newcommand{\Z}{\mathbb{Z}}
\newcommand{\ulc}{\underline{c}}
\newcommand{\ult}{\underline{t}}
\newcommand{\uls}{\underline{s}}
\newcommand{\ulu}{\underline{u}}
\newcommand{\ule}{\underline{e}}
\newcommand{\ulzero}{\underline{0}}
\newcommand{\alp}{\alpha}
\newcommand{\NN}{\mathbb{N}}
\newcommand{\RR}{\mathbb{R}}
\newcommand{\ZZ}{\mathbb{Z}}
\newcommand{\sbs}{\subseteq}
\newcommand{\sps}{\supseteq}
\DeclareMathOperator{\conv}{conv}
\DeclareMathOperator{\New}{New}
\DeclareMathOperator{\Int}{int}
\title[Lower Bounds for Polynomials based on Geometric Programming]{Lower Bounds for Polynomials with Simplex Newton Polytopes Based on Geometric Programming}
\author{Sadik Iliman} \author{Timo de Wolff}
\address{Sadik Iliman, Goethe-Universit\"at, FB 12 -- Institut f\"ur Mathematik,
Postfach 11 19 32, 60054 Frankfurt am Main, Germany\medskip \newline 
\hspace*{8pt} Timo de Wolff, Texas A\&M University, Department of Mathematics, College Station, TX 77843-3368, 
 USA\medskip}
\email{iliman@math.uni-frankfurt.de \\
dewolff@math.tamu.edu }
\subjclass[2010]{12D15, 14P99, 52B20, 90C25}
\keywords{Geometric programming, lower bound, nonnegative polynomial, semidefinite programming, simplex, sparsity, sum of nonnegative circuit polynomials, sum of squares}
\begin{document}

\begin{abstract}
In this article, we propose a geometric programming method in order to compute lower bounds for real polynomials. 
We provide new sufficient conditions for polynomials to be nonnegative as well as to have a sum of binomial squares representation. These criteria rely on the coefficients and the support of a polynomial and generalize all previous ones by Lasserre, Ghasemi, Marshall, Fidalgo and Kovacec to polynomials with arbitrary simplex Newton polytopes.

This generalization yields a geometric programming approach for computing lower bounds for polynomials that significantly extends the geometric programming method proposed by Ghasemi and Marshall. Furthermore, it shows that geometric programming is strongly related to nonnegativity certificates based on sums of nonnegative circuit polynomials, which were recently introduced by the authors.
\end{abstract}

\maketitle

\section{Introduction}
Finding lower bounds for real polynomials is a central problem in polynomial optimization. For polynomials with few variables or low degree, and for polynomials with additional structural properties there exist several well working approaches to this problem. The best known lower bounds are provided by \emph{Lasserre relaxations} using semidefinite programming. Although the optimal value of a semidefinite program can be computed in polynomial time (up to an additive error), the size of such programs grows rapidly with the number of variables or degree of the polynomials. Hence, there is much recent interest in finding lower bounds for polynomials using alternative approaches such as \emph{geometric programming} (see \eqref{Equ:GP} for a formal definition). Geometric programs can be solved in polynomial time using interior point methods \cite{nesterov}; see also \cite[Page 118]{Boyd:GP}. In this article, we provide new lower bounds for polynomials using geometric programs. These bounds extend results in \cite{GP} by Ghasemi and Marshall.\\

Let $\R[x] = \R[x_1,\ldots,x_n]$ and let $\R[x]_{d} = \R[x_1,\dots,x_n]_{d}$ be the space of polynomials of degree $d \in \N$. A \textit{global polynomial optimization problem} for some $f \in \R[x]_{2d}$ is given by
\begin{eqnarray*}
f^* & = &\inf\{f(x) : x\in\R^n\} \ = \ \sup\{\lambda\in\R : f - \lambda \geq 0\}.
\end{eqnarray*}

It is well-known that in general computing $f^*$ is NP-hard \cite{Dickinson:Gijben}. By relaxing the nonnegativity condition to a sum of squares condition, a lower bound for $f^*$ based on semidefinite programming is given by
\begin{eqnarray*}
	f_{\rm sos} & = & \sup\left\{\lambda \in \mathbb R \ : \ f - \lambda = \sum_{i=1}^k q_i^2 \ \text{ for some } \ q_i\in\R[x]\right\}
\end{eqnarray*}
and hence $f_{\rm sos} \leq f^*$ \cite{Lasserre:Buch}. A central open problem in polynomial optimization is to analyze the gap $f^* - f_{\rm sos}$. Very little is known about this gap beyond the cases where it always vanishes. This happens exactly for $(n,2d) \in \{(1,2d),(n,2),(2,4)\}$ by Hilbert's Theorem \cite{Hilbert:Seminal}. For an overview about the topic see \cite{Blekherman:Parrilo:Thomas, Lasserre:Buch, Laurent:Survey}.

Let $e_1,\ldots,e_n$ denote the standard basis of $\R^n$. In \cite{Fidalgo:Kovacec}, Fidalgo and Kovacec consider the class of polynomials, whose Newton polytope is a scaling of the standard simplex $\conv\{0,2d\,e_1,\dots,2d\,e_n\}$. For these polynomials they provide certificates, i.e., sufficient conditions, both for nonnegativity and for being a sum of squares. In \cite{GP} Ghasemi and Marshall show that these certificates can be translated into checking feasibility of a geometric program. Moreover, in their recent works \cite{GP,GP:semialgebraic} Ghasemi and Marshall show several important further facts for polynomial optimization via geometric programming. Two key observations are the following ones:
\begin{enumerate}
	\item For general polynomials lower bounds based on geometric programming are seemingly not as good as bounds obtained by semidefinite programming.
	\item Even higher dimensional examples can often be solved quite fast via geometric programming. In contrast, semidefinite programs often do not provide an output for problems involving polynomials with many variables or of high degree (at least with the current SDP solvers).
\end{enumerate}

\medskip

The extension of Ghasemi and Marshall's results, which we provide in this article, relies on the following key observation. In addition to the sum of squares approach, one can use \textit{nonnegative circuit polynomials} to certify nonnegativity. Nonnegative circuit polynomials  were recently introduced by the authors in \cite{Iliman:deWolff:Circuits}. Particularly, the authors results in \cite{Iliman:deWolff:Circuits} imply as a special case the sufficient condition for nonnegativity by Fidalgo and Kovacec \cite{Fidalgo:Kovacec}, which was used by Ghasemi and Marshall. Therefore, it is self-evident to ask whether the translation into geometric programs can also be generalized. The purpose of this article is to show that this is indeed the case.

Let $f \in \R[x]$ be a real polynomial with simplex Newton polytope such that all its vertices are in $(2\N)^n$ and the coefficients of the terms corresponding to the vertices are nonnegative. The main theoretical results we contribute in Section \ref{Sec:MainResults} are some easily checkable criteria on the coefficients of such a polynomial $f$, which imply that $f$ is nonnegative. More precisely, these criteria imply that $f$ is a \textit{sum of nonnegative circuit polynomials ({\sc sonc})}, and, as a consequence of results by the authors in \cite{Iliman:deWolff:Circuits}, every {\sc sonc} is nonnegative. See Theorems \ref{thm:main1} and \ref{thm:main2}, and see Section \ref{Sec:Preliminaries} for a formal definition of a {\sc sonc}. Moreover, we provide a second criterion on the support of a {\sc sonc} polynomial, which implies that the {\sc sonc} additionally is a sum of binomial squares.

The key observation is that, as in \cite{GP}, these criteria can be translated into a geometric optimization problem (Corollary \ref{cor:gp}) in order to find a lower bound for a polynomial. As a surprising fact we show in Corollary \ref{cor:gpbessersonc} that for very rich classes of polynomials with simplex Newton polytope, the optimal value $f_{\rm gp}$ of the corresponding geometric program is at least as good as the bound $f_{\rm sos}$. In fact $f_{\rm gp}$ \textit{is} $f^*$ in these cases. This is in sharp contrast to the general observation by Ghasemi and Marshall \cite{GP,GP:semialgebraic}, which we outlined above in (1). Additionally, based on similar examples, we can see that the computation of $f_{\rm gp}$ is much faster than in the corresponding semidefinite optimization problem, as it was already shown numerically in \cite{GP}. 

Using the geometric programming software package \textsc{gpposy} for \textsc{Matlab}, we demonstrate the capabilities of our results on the basis of different examples. A major observation is that the bounds $f_{\rm gp}$ and $f_{\rm sos}$ are not comparable in general, since the convex cones of {\sc sonc}'s and sums of squares do not contain each other, see \cite{Iliman:deWolff:Circuits}. This observation, again, is in sharp contrast to the one in \cite{GP} where the bound $f_{\rm sos}$ and the bound given by Ghasemi and Marshall's geometric program are comparable.

Furthermore, we show in Section \ref{Sec:Constrained} that our methods are not only applicable to global polynomial optimization problems, but also to constrained ones using similar methods as Ghasemi and Marshall in \cite{GP:semialgebraic}. A further discussion of the constrained case and generalizations of the results in Section \ref{Sec:Constrained} and in \cite{GP:semialgebraic} is content of the follow-up article \cite{Dressler:Iliman:deWolff:GPConstrained}.

\section{Preliminaries}
\label{Sec:Preliminaries}

We consider a polynomial $f \in \R[x]_{2d}$ of the form $f = \sum_{\alpha \in \N^n}^{} f_{\alpha}x^{\alpha}$ with $f_{\alpha} \in \R$, $x^{\alpha} = x_1^{\alpha_1} \cdots x_n^{\alpha_n}$, and Newton polytope $\New(f) = \conv\{\alpha \in \N^n : f_{\alpha} \neq 0\}$. We call a lattice point \textit{even} if it is in $(2\N)^n$. A {\it binomial} is 
an expression of the form $rx^\alpha + sx^\beta$  with $r,s\in \RR;$ if 
at least one of $r,s$ is 0, it is (also) a monomial.  If a polynomial is a sum of squares of binomials, then it is customary to abbreviate
this by saying it is a {\sc sobs}, meaning it is a ``sum of binomial squares''.
We denote by $\conv(S)$  the convex hull   of a subset $S$ of $\RR^n.$
Our interest lies foremost in \textit{ST-polynomials} which we define as follows.

\begin{definition}
An \emph{ST-polynomial written in standard form} is a polynomial of the form
\begin{eqnarray*}
 f & = & f_{\alpha(0)}+\sum_{j=1}^n f_{\alpha(j)} x^{\alpha(j)} +\sum_{\alpha \in \Delta} f_\alpha x^\alpha,
\end{eqnarray*}
with  exponents $\alpha(j)$ and $\alpha$, coefficients $f_{\alpha(j)}, f_\alpha$, and a set $\Delta$ 
for which the following hold:

\begin{description}
 \item[(ST1)] The points $\alpha(0)=0$ and $ \alpha(1), \alpha(2),\ldots,\alpha(n)$ define a set $V$ of affinely independent, even points in $(2\NN)^n.$
 \item[(ST2)] $\Delta$ is the set of exponents $\alpha$ in $f$ not defining \textit{monomial squares};
 i.e. $\alpha\in \Delta$ iff $f_\alpha<0$ or $\alpha\not \in (2\NN)^n$ and $f_{\alp} > 0$.
 \item[(ST3)] There holds the inclusion $\Delta \sbs \conv(V);$ or, equivalently,  
 every $\alpha \in \Delta$ can be written uniquely as 
 \begin{eqnarray*}
  & & \alpha \ = \ \sum_{j=0}^n \lambda_j^{(\alpha)} \alpha(j) \ \text{ with } \ \lambda_j^{(\alpha)} \ \geq \ 0 \ \text{ and } \  \sum_{j=0}^n \lambda_j^{(\alpha)} \ = \ 1.
 \end{eqnarray*}
 \item[(ST4)] $f_0=f_{\alpha(0)}, f_{\alpha(1)}, \cdots, f_{\alpha(n)}$ are nonnegative and if 
     $f_{\alpha(j)}=0,$ then for all $\alpha \in \Delta$ there holds $\lambda_j^{(\alpha)}=0$.
\end{description}
The ``ST'' in ``ST-polynomial'' is short for ``simplex tail''. The tail part is
given by the sum $\sum_{\alpha\in \Delta} f_{\alpha} x^{\alpha},$ while the other terms define the simplex part. 
\end{definition}

Note that hypothesis (ST1) implies that  $V=\{\alpha(0),\ldots,\alpha(n)\}$ is the vertex set of  
a full dimensional simplex. It consists of even lattice points and has one vertex at the origin. 
Hypothesis (ST3) implies that the 
Newton polytope of $f$ is a face of  this simplex, possibly the simplex itself. 
 We will have $\New(f)=\conv(V)$ if and only if 
all $f_{\alpha(j)}$ are positive.  Otherwise, if $f_{\alpha(j)}=0$ for some $j$, then $\New(f)$ is a proper face 
of $\conv(V).$ Hypotheses (ST2), (ST3) and (ST4) together imply that $\Delta\sbs \New(f)\setminus V.$
So, $\Delta$ is uniquely defined by $f$ and may be referred to by  $\Delta(f).$

The $\lambda_j^{(\alpha)}$ denote the barycentric coordinates of $\alpha$ relative to the vertices $\alpha(j),$  
$j=0,\ldots,n.$   An ST-polynomial is homogeneous only if $f_0=0$
and for all  $\alpha, \alpha(i), \alpha(j)$ occurring in $f$ (associated to nonzero coefficients) we have 
$|\alpha(i)| = |\alpha(j)| = |\alpha|,$ where  $|\alpha|=\sum_{i=1}^n \alpha_i.$
In this case $\lambda_0^{(\alpha)}=0$ holds for all $\alpha\in \Delta$; the converse needs not to be true.\\
Given a polynomial $f,$  well established algorithms from convex geometry allow to 
determine if $f$ is ST and if so to rewrite it in this form. \\

\begin{definition}
An ST-polynomial is  a \emph{circuit polynomial} if $\Delta(f)$ is empty or a singleton.
 We fix the standard
notation for a circuit polynomial $f$ and define the associated \emph{circuit number} $\Theta_f$, which was first defined by the authors in \cite{Iliman:deWolff:Circuits}, as follows:
\begin{eqnarray}
 & & f \ = \ f_0+\sum_{j=1}^n f_{\alpha (j)}x^{\alpha(j)} +c x^\alpha, \ \text{ and } \ \Theta_f \ = \ \prod_{j\in {\rm nz}(\alpha)} \left(\frac{f_{\alpha(j)}}{\lambda_j^{(\alpha)}}\right)^{\lambda_j^{(\alpha)}} \label{Equ:DefCircuitNumber}
\end{eqnarray}
with ${\rm nz}(\alpha)=\{j\in \{0,\ldots,n\}:\lambda_j^{(\alpha)}\neq 0\}.$ In the uninteresting case $c=0,$ i.e. $\Delta(f)=\emptyset,$ define $\Theta_f=1$.
\end{definition}
Much of our work will be centered around writing an ST-polynomial as a sum of nonnegative circuit polynomials ({\sc sonc}).

Let $e_i=(\delta_{i1},\ldots,\delta_{in})$ be the $i$-th standard vector.
The class of  ST-polynomials  covers in essence the class of polynomials of 
degree $2d$ considered by Ghasemi and Marshall, see, e.g., \cite[Theorem 2.3 and Corollary 2.5]{GP}. This can be seen by putting 
$\alpha(j)= 2de_j$  for $j=1,\ldots,n$  and noting that an $\alpha$ with $|\alpha|\leq 2d$ lies in the convex hull of $\alpha(0)=0,2de_1,\ldots,
2de_n.$ Note that Ghasemi and Marshall admit in the definition of their polynomials $f$
larger sets $\Omega$ of exponents than $\Delta$. However, the difference 
$\Omega \setminus \Delta$ is associated to terms $f_\alpha x^\alpha$ that are monomial squares. All their criteria for the sum of squares 
property and algorithms for lower bounds of polynomials use virtually without exception only the information on the coefficients 
$f_{\alpha(j)}$ and $f_\alpha$ with $\alpha\in \Delta.$  The same will hold in this paper. 
Everything we could say on basis of the present investigation  for more general classes of polynomials would be a 
trivial consequence of what we find here for ST-polynomials. This is the reason why we concentrate on the class of polynomials 
defined above. \\

A fundamental fact is that nonnegativity of a circuit polynomial $f$ can be completely decided by comparing its tail coefficient
with its circuit number $\Theta_f.$ 

\begin{thm}[\cite{Iliman:deWolff:Circuits}, Theorem 3.8]
\label{thm:positiv}
 Let $f$  be a  circuit polynomial in standard form and $\Theta_f$ its circuit number, as defined in \eqref{Equ:DefCircuitNumber}. 
Then the following are equivalent:
\begin{enumerate}
 \item $f$ is nonnegative.
 \item $|c| \leq \Theta_f$ and $\alpha\not \in (2\NN)^n$ \quad or \quad $c\geq -\Theta_f$ and $\alpha \in (2\NN)^n$ \quad or \quad $\Delta(f) = \emptyset.$
\end{enumerate}
\end{thm}

Note that (2) can be equivalently stated as: $|c| \leq \Theta_f$ or $c = 0$ or $f$ is a sum of monomial squares. 
At this point we remark that the definition of the circuit number $\Theta_f$ slightly differs from the one used in \cite{Iliman:deWolff:Circuits}. The reason is that $\alpha\in\Delta(f)$ is not necessarily assumed to be an interior point as it is in \cite{Iliman:deWolff:Circuits}. The difference between the two definitions is explained by \cite[Lemma 3.7]{Iliman:deWolff:Circuits}.

Writing a polynomial as a sum of nonnegative circuit polynomials is a certificate of nonnegativity. Let us denote
by  {\sc sonc}  the class of polynomials that are {\it sums of nonnegative circuit polynomials} or the property of a polynomial
to be in this class.  We show that this class brings new insights to polynomial optimization and hence deserves a place among the well established classes {\sc sos} (sums of squares) and {\sc sobs} (sums of binomial squares). For further details about \textsc{sonc}'s see \cite{deWolff:Circuits:OWR,Iliman:deWolff:Circuits}

\begin{example}
 Investigate the family $m_c(x,y)=1+x^2y^4+x^4y^2+c x^2y^2$. There needs to exist a smallest negative $c \in \R_{< 0}$ such that $m_c(x,y)$ is nonnegative. For negative $c$ the corresponding polynomial $m$ will be a circuit polynomial since $f_{(0,0)}=f_{(2,4)}=f_{(4,2)}=1$ and
$(2,2)=\frac{1}{3}(0,0)+\frac{1}{3}(2,4)+\frac{1}{3}(4,2).$ We obtain the circuit number 
\begin{eqnarray*}
 \Theta_{m_c} & = & \left(\frac{1}{1/3}\right)^{1/3} \cdot \left(\frac{1}{1/3}\right)^{1/3} \cdot \left(\frac{1}{1/3}\right)^{1/3} \ = \ 3.
\end{eqnarray*}
Thus, $m_c$ is nonnegative for $c\geq -3$ but no smaller real number. 
The polynomial obtained for $c = - 3$ is the well-known Motzkin polynomial which is known not to be a sum of squares.
\end{example}

\begin{example}
In \cite[Theorem 2.3]{Fidalgo:Kovacec} forms (homogeneous polynomials) denoted by
  $E(x)=b_1x_1^{2d}+\cdots +b_n x_n^{2d}-\mu  x_1^{a_1}\cdots x_n^{a_n}$ with $b_i\geq 0$ are analyzed concerning nonnegativity. $E(x)$ is an instance of a circuit polynomial (in \cite{Fidalgo:Kovacec} called elementary 
diagonal minus tail form) with $c=-\mu$ in the sense of Theorem \ref{thm:positiv}. Since $\alpha=(a_1,\ldots,a_n)=\sum_{i=1}^n \frac{a_i}{2d} 2d e_i,$ we obtain
\begin{eqnarray*}
 \Theta_E & = & \prod_{i\in \rm nz (\alpha)}\left(\frac{b_i}{(a_i/2d)}  \right)^{a_i/2d}
    \ = \ 2d \prod_{\substack{i=1 \\ a_i\neq 0}}^n \left(\frac{b_i}{a_i}  \right)^{a_i/2d},
\end{eqnarray*}
as the threshold value for nonnegativity.
\end{example}

In the earlier paper \cite{Iliman:deWolff:Circuits}, a criterion for a polynomial with simplex Newton polytope to be a {\sc sonc} was established.

\begin{thm}{\cite[Corollary 7.4]{Iliman:deWolff:Circuits}}
\label{thm:SOSMultiple}
Let $f$  be a nonnegative ST-polynomial in standard notation and $\Delta(f) \subseteq \Int(\New(f)\cap \mathbb N^n)$. If there exists a point
 $v\in (\R^*)^n$ such that $f_\alpha v^\alpha <0$ for all $\alpha\in \Delta(f)$, then $f$ is a {\sc sonc} and the circuit polynomials entering in this {\sc sonc} decomposition will all have the same Newton polytope as $f$. 
 \end{thm}

The relation between \textsc{sonc}'s and \textsc{sobs} is clarified by using definitions and results from Reznick \cite{Reznick:AGI}. Note that we find it convenient to remain near to Reznick's own notation in this discussion till the end of the
proof of Proposition 2.7. He
defines, given a set $L\sbs \Z^n,$ the sets of averages 
$$A(L) \ = \ \left\{\frac{1}{2} (s + t): s, t \in L \cap (2\ZZ)^n\right\}\,\, \textrm{and}\,\, \bar{A}(L) \ = \ \left\{\frac{1}{2} (s + t): s, t \in L \cap (2\ZZ)^n, s\neq t\right\}.$$
Given a set $P \sbs (2\ZZ)^n$, $L$ is $P$-{\it mediated} if $P \sbs L \sbs \bar{A}(L) \cup P;$
see \cite[p. 433,438]{Reznick:AGI}.
He then shows that there exists, given $P,$ a maximal $P$-mediated set $P^*$ that contains 
{\it every} $P$-mediated set. Reznick explains things in the context of ``frameworks'', that are sets of even lattice points 
that all have the same 1-norm. But his algorithmic construction of $P^*$ works literally for any finite set of even lattice points. 
To find $P^*$ one begins with $P^0=C(P)= \conv(P) \cap \ZZ^n$ and constructs via
$P^{k+1}=\bar A(P^k)\cup P$ inductively a contracting sequence of 
sets $P^0\sps P^1 \sps P^2 \sps\ldots$ \, . As $P^0$ is finite, this sequence  becomes stationary at a set 
which is shown to be the required $P^*.$ If the convex hull of $P$ is a simplex and $P^* =C(P),$ then Reznick calls $P$ an 
$H$\textit{-trellis} \cite[p436c1]{Reznick:AGI}. The ``$H$'' is borrowed from the fact that the \textit{Hurwitz form} has the standard simplex as its Newton polytope. The standard simplex has the property that its corresponding $P^*$ satisfies $P^* = C(P) = \conv(P) \cap \Z^n$. We shall speak of an $H$\textit{-simplex}. \\

We will apply Reznick's construction to the vertex set $V=V(f)$ of the Newton polytope of an ST-polynomial $f$ and write, with slight abuse of notation, $\New(f)^*$ for $V^*.$ As mentioned, the papers \cite{Fidalgo:Kovacec,GP} deal exclusively with polynomials $f$ in which $V(f)$ consists
of the scaled standard vectors and, possibly, $0=\alpha(0).$ To put the results of those papers into perspective, 
we show that 
the inhomogeneous simplices generated by the origin and the standard vectors, just as their homogeneous 
counterparts are also $H$-simplices.  

\begin{prop}\label{prop:trellis}
$\mathfrak{H}=\{0,2d\,e_1,\ldots,2d\,e_n\}$ is a nonhomogeneous $H$-simplex; that is, 
$\mathfrak{H}^*=C(\mathfrak H).$ Every subset of $\mathfrak{H}$ again defines an $H$-simplex.
\end{prop}

\begin{proof}
 We adapt Reznick's proof who shows an analogous fact for the case $\{2de_1,\ldots,2de_n\}.$ 
It is clear that $\mathfrak{H}$ is a nonhomogeneous trellis in the sense that the vertices define a  simplex which in 
our case is full dimensional. By Reznick's criterion \cite[p438c-6]{Reznick:AGI} and notation $E(.)=C(.)\cap (2\ZZ)^n,$ we have to show that 
$\bar{A}(E(\mathfrak H))=C(\mathfrak H)\setminus 
\mathfrak H$. (The reader should note that Reznick's criterion is actually false in the context said there - frameworks - but it is
true for trellises, in particular $\mathfrak H$.) The points in $\mathfrak H$ have at most one nonzero entry. If this happens for a point $\ulu$ in 
$\bar{A}(E(\mathfrak H))$, then  the two distinct points in $E(\mathfrak H)$ with average $\ulu$ must also have at most one 
nonzero entry at the same position as $\ulu$. But then distinctness implies $|\ulu|<2d$ and so $\ulu\not \in \mathfrak H.$ So 
$\bar{A}(E(\mathfrak H)) \sbs C(\mathfrak H)\setminus \mathfrak H.$  Now consider a point 
$\ulc \in C(\mathfrak H)\setminus \mathfrak H.$

Case 1: $\ulc$ has only one nonzero coordinate. Then $\ulc=c \cdot \ule_i$ for some integer $c$ with $0<c <2d.$ Points
in $E(\mathfrak H)$ whose average might yield $\ulc$ are necessarily points of the form $0\ule_i, 2\ule_i, 4\ule_i,\ldots, 2d \ule_i.$  If $c$ is odd then $c - 1$ and $c + 1$ are even integers in $\{0,\ldots,2d\}$ and $\ulc= \frac{1}{2} ((c-1)\ule_i + (c+1)\ule_i)$ is the desired representation. If $c$ is even, then we may conclude analogously using $c - 2,c + 2$ instead. 

Case 2: $\ulc=(c_1,\ldots,c_n)$ has two or more nonzero coordinates. Then we may assume by symmetry and for ease of future notation, 
that  $1\leq c_1\leq c_n .$ Since $\ulc \in C(\mathfrak H) \setminus \mathfrak{H} \sbs \ZZ_{\geq 0}^n,$ we have $c=|\ulc|\in \{1,\ldots,2d\}$ and 
$1\leq c_1 \leq c/2.$ 
We define $b_r=\sum_{i=1}^r c_i$ and find the unique  $k\in \{1,\ldots,n-1\}$ such that 
$b_k \leq c/2 \leq b_{k+1}.$ Define $\dot c = c + 1$ if $c$ is odd and  $\dot c = c$ if $c$ is even.

Now we let $\uls = (2c_1,\ldots,2c_k,\dot c-2b_k,0,\ldots,0)$ and $\ult=(0,\ldots,0,2b_{k+1}-\dot c,2c_{k+2},\ldots,2c_n).$
Assume $2b_{k+1}-c = 0.$ Then $\dot c = c$ and hence also $k+1 < n,$ since otherwise $2b_n-c = c=0.$  Thus, $\uls, \ult\neq 0.$
If $c$ is odd, then $2b_{k+1} > c > 2b_k$.

Therefore, in all cases $\uls,\ult> 0,$ and $\uls+\ult = 2\ulc.$  So, $\uls,\ult $ are even lattice points whose
average is $\ulc.$ Finally note that we have $\uls=\sum_{i=1}^k \frac{2c_i}{2d} 2d\ule_i + \frac{\dot c-2b_k}{2d} (2d e_{k+1})+ \frac{2d-\dot c}{2d} \ulzero$ as a convex combination of points in $\mathfrak H$
yielding  $\uls$. Thus, showing $\uls\in E(\mathfrak H).$ Similarly, one shows that $\ult \in E(\mathfrak H).$
Thus $\bar{A}(E(\mathfrak H)) \sps C(\mathfrak H)\setminus \mathfrak H$ and we have proven the first part. 

It is clear by deleting zeros in the coordinates which are not equal to $i_1,\ldots,i_k$ that subsets of the form $\{0,e_{i_1},\ldots,e_{i_k}\},$ $k\leq n$ will again define $H$-simplices. From this and
Reznick's own result the claim concerning subsets follows.
\end{proof}

\begin{example}
Figure \ref{Fig:Simplices} shows a scaled standard simplex for the case $d=3, n=2.$
\end{example}

\begin{figure}
\ifpictures
$\includegraphics[width=0.35\linewidth]{./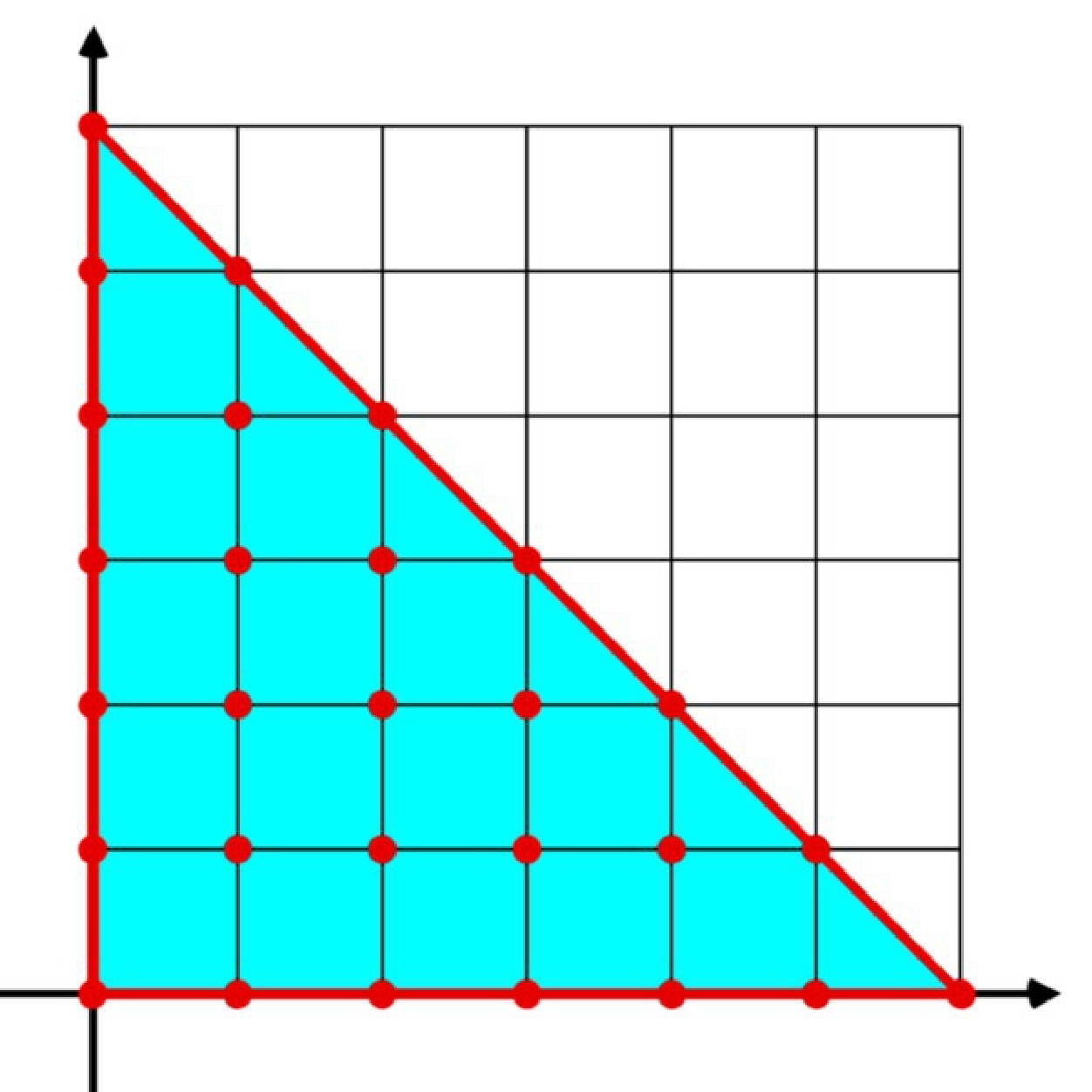}$
\fi
\caption{The $H$-simplex $\conv\{(0,0),(6,0),(0,6)\} \subset \R^2$. All lattice points are contained in the corresponding maximal mediated set.
}
\label{Fig:Simplices}
\end{figure}

Using \cite[Corollary 4.9]{Reznick:AGI}, which gives a necessary and sufficient criterion when ``simplicial agiforms'' are
\textsc{sobs}, we developed in \cite{Iliman:deWolff:Circuits} the following theorem. 

\begin{thm}[\cite{Iliman:deWolff:Circuits}, Theorem 5.2]\label{thm:sos}
Let $f$ be a nonnegative circuit polynomial in standard notation, as defined in \eqref{Equ:DefCircuitNumber}. Then the following statements are equivalent:
\begin{enumerate}
 \item $f$ is a sum of squares,
 \item $f$ is a sum of binomial squares,
 \item $\alpha \in  \New(f)^*$.
\end{enumerate}
\end{thm}

From the Theorems \ref{thm:SOSMultiple} and \ref{thm:sos} we obtain the following corollary.

\begin{cor}[\cite{Iliman:deWolff:Circuits}, Corollary 7.4]
\label{thm:SBSMultiple}
 Let $f$ be an ST-polynomial in standard form such that $\Delta(f) \subseteq \Int(\New(f)\cap \mathbb N^n)$. If there exists $v\in (\R^*)^n$ with $f_\alpha v^\alpha <0$ for all $\alpha\in \Delta(f)$ and  
  $\Delta(f) \sbs \New(f)^*$, then $f$ is nonnegative if and only if $f$ is a sum of binomial squares. 
\end{cor}

Thus, by Theorem \ref{thm:sos}, a circuit polynomial with $H$-simplex Newton polytope is nonnegative if and only if it is
a sum of squares. For the very special case of the scaled standard simplex and expressed in the homogeneous case this is the main result of \cite{Fidalgo:Kovacec}.

In \cite[Theorem 5.9]{Iliman:deWolff:Circuits}, sufficient conditions based on 2-normality of polytopes and toric
geometry are given for a simplex to be an $H$-simplex. In particular, every sufficiently large simplex with even vertices is an $H$-simplex in $\R^n$; see \cite[Section 5.1]{Iliman:deWolff:Circuits} for details.

\begin{example}
 Let $f = \frac{7}{12} + x_1^6 + x_2^4 + c · x_1x_2$ with $c \in  \R^*$. Note that the interior
lattice point $(1, 1)^T$ has the barycentric coordinates $(\lambda_0, \lambda_1, \lambda_2) = ( \frac{7}{12},\frac{1}{6},\frac{1}{4})$ in terms of the vertices of $\New(f)$.  
By Theorem \ref{thm:positiv} $f$ is nonnegative if and only if $|c| \leq 6^{\frac{1}{6}} 4^{\frac{1}{4}}.$
Once it is determined that $f$ is nonnegative, the question whether or not $f$ is a sum of
squares depends solely on the lattice point configuration of New$(f).$ It is easy to check
that $\New(f)^* = \New(f) \cap \NN^2.$ Hence, in particular the ``inner'' term $c \cdot x_1x_2$ has an
exponent $(1, 1)^T \in \New(f)^*$. Therefore, by Theorem \ref{thm:sos}, $f$ is a sum of binomial squares
if $f$ is nonnegative.
\end{example}

As mentioned before, the results of the preliminaries are, up to slight variations, taken from the article \cite{Iliman:deWolff:Circuits}. For further background on the results used here the interested reader should particularly focus on Section 3 for nonnegativity of circuit polynomials, Section 5 for the relation to \textsc{sos}, and on Section 7 for the structure of the \textsc{sonc} cone in \cite{Iliman:deWolff:Circuits}. Moreover, a short overview about these results can be found in the Oberwolfach report \cite{deWolff:Circuits:OWR}.

\section{Main Results}
\label{Sec:MainResults}
In this section, we provide sufficient criteria on the coefficients of a polynomial $f$ which
imply that $f$ is a sum of nonnegative circuit polynomials or a sum of (binomial)
squares and therefore that $f$ is nonnegative. We introduce a new lower bound for nonnegativity,
which we will later relate to geometric programs. 

\begin{thm}
\label{thm:main1}
Let $f$ be an ST-polynomial in standard form. Assume that for every pair 
$(\alpha,j) \in \Delta(f)\times \{0,1,\ldots,n\} $ 
there exists an $a_{\alpha,j} \geq 0$  such that the following holds:
\begin{enumerate}
 \item $|f_\alpha| \leq \prod\limits_{j\in {\rm nz(\alpha)}} \left(\frac{a_{\alpha,j}}{\lambda_j^{(\alpha)}}\right)^{\lambda_j^{(\alpha)}}$,
 \item $f_{\alpha(j)}\geq \sum\limits_{\alpha\in \Delta(f)} a_{\alpha,j},$ \quad for all $j = 0,\ldots,n.$
\end{enumerate}
Then $f$  is a sum of $|\Delta(f)|$ nonnegative circuit polynomials ({\sc sonc}), which all have faces of $\New(f)$ as Newton polytopes.
If in addition $\Delta(f) \sbs \New(f)^*,$ then $f$ is a sum of binomial squares. 
\end{thm}

\begin{proof}
 Fix an $\alpha\in \Delta(f).$  Then $|f_\alpha|>0$ and  the condition (1) guarantees that if $\lambda_j^{(\alpha)}>0$, then $a_{\alpha,j} >0.$ Together with the nonnegativity of the $a_{\alpha,j}$ this guarantees that $g_\alpha= a_{\alpha,0}+\sum_{j=1}^n a_{\alpha,j}x^{\alpha(j)} +f_\alpha x^\alpha$ is a circuit polynomial; see condition (4) of the definition of an ST-polynomial. The circuit number of the polynomial $g_\alpha$ is 
$\prod_{j\in {\rm nz(\alpha)}} (a_{\alpha,j}/\lambda_j^{(\alpha)})^{\lambda_j^{(\alpha)}}.$ As by hypothesis (ST2) $f_\alpha<0$ or 
$\alpha\not \in (2\NN)^n,$ condition (1) implies by Theorem \ref{thm:positiv} that $g_\alpha$ is nonnegative and 
by summing over all $\alpha \in \Delta(f)$ we obtain the {\sc sonc} 
\begin{eqnarray*}
 \sum_{\alpha\in \Delta(f)} g_\alpha & = & \sum_{j=0}^n \left(\sum_{\alpha \in \Delta(f)} a_{\alpha,j}\right) x^{\alpha(j)} + \sum_{\alpha\in \Delta(f)} f_\alpha x^\alpha.
\end{eqnarray*}

For each   $j=0,\ldots,n,$ the expression $(f_{\alpha(j)}-\sum_{\alpha\in \Delta(f)} a_{\alpha,j}) \cdot x^{\alpha(j)}$ is a monomial square and hence nonnegative by condition (2). Adding these expressions to one of the circuit polynomials, call it $g_{\tilde \alpha},$ 
we get a new circuit polynomial $\tilde g_{\tilde \alpha}  \geq  g_{\tilde \alpha}.$ Then  
$\tilde g_{\tilde \alpha} + \sum_{\alpha\in \Delta(f)\setminus \{\tilde \alpha\}} g_\alpha =f.$ Evidently, the Newton polytopes
of the circuit polynomials in this sum are all faces of the polytope $\New(f)$ and we are done with the first part.

Finally, if the $\alpha\in \Delta(f)$ are in $\New(f)^*$, then by Theorem \ref{thm:sos} the polynomials $g_\alpha$  are sums of binomial squares. Thus, by construction $\tilde g_{\tilde \alpha}$ is also a sum of  binomial squares. Therefore, $f$ is a sum of binomial squares. 
\end{proof} 

From this theorem we can deduce the following sufficient condition for the existence of a \textsc{sonc}-decomposition of an ST-polynomial. This condition depends on the coefficients of the polynomial alone.

\begin{cor}
 Let $f$ be an ST-polynomial in standard notation. Assume that  
 $f_{\alpha(j)} \geq \sum_{\alpha \in \Delta(f)} |f_\alpha| \lambda_j^{(\alpha)}$ for each $(\alpha,j)\in \Delta(f)\times \{0,\ldots,n\}.$
Then $f$ is a {\sc sonc}. If in addition $\Delta(f) \sbs \New(f)^*,$ then $f$ is {\sc sobs}.
\end{cor}

\begin{proof} 
Choose in Theorem \ref{thm:main1} $a_{\alpha,j}=|f_\alpha| \lambda_j^{(\alpha)}.$ Then the product in condition (1) of Theorem \ref{thm:main1} is $ |f_\alpha|$ and hence, that condition is satisfied. The corollary follows.
\end{proof}

This corollary yields a result in \cite{GP} as a consequence. 

\begin{cor}[\cite{GP}, Corollary 2.5]
 Let $f\in \RR[x]$ be a polynomial of degree $2d.$ If
 \begin{enumerate}
  \item $f_0 \geq \sum\limits_{\alpha\in \Delta} |f_\alpha|\frac{2d-|\alpha|}{2d}$ and
  \item $f_{2de_i} \geq \sum\limits_{\alpha\in \Delta} |f_\alpha|\frac{\alpha_i}{2d}$, for all $i = 1,\ldots,n$,
 \end{enumerate}
then $f$ is a sum of squares. 
\end{cor}

Since the proof of the corollary illustrates some points of the paper \cite{GP}, we give it here for the convenience of the reader.

\begin{proof}
 Write the polynomial in the form considered in \cite{GP} as $\tilde f =f_0+ \sum_{i=1}^n f_{2de_i} x^{2de_i} + \sum_{\alpha\in \Omega(f)} f_\alpha x^\alpha.$
Of course, this polynomial is a sum of squares if the truncated polynomial $f$ obtained by deleting the monomial square terms is a sum of squares. The truncated polynomial $f$ is ST. 
Then $f$ has a face of the simplex $\conv(V),$ $V=\{0, 2de_1,\ldots,2de_n\}$ as its Newton polytope.    
Since $\alpha= \sum_{i=1}^n\frac{ \alpha_i}{2d} (2de_i),$ we find $\lambda_0^{(\alpha)}=(2d-|\alpha|)/2d,$  and $\lambda_i^{(\alpha)}= \alpha_i/2d,$ for $i=1,\ldots,n$. Hence, the hypothesis of the 
present corollary translates into 
$f_{\alpha(j)} \geq \sum_{\alpha\in \Delta} |f_\alpha|\lambda_j^{(\alpha)},$ $j=0,1,\ldots,n.$ 
By Proposition \ref{prop:trellis} $\conv(V)$ 
is an $H$-simplex. So $\Delta(f) \sbs \New(f)^*.$ The corollary follows from the previous one.
\end{proof}

Theorem \ref{thm:main1} yields new sufficient criteria for a polynomial to be a {\sc sonc}  as well as to
be a sum of (binomial) squares. These criteria depend on the coefficients and the support
of the polynomial alone. They significantly extend previous sum of squares criteria given in
\cite{Fidalgo:Kovacec, GP, Lasserre} since we do not require the assumption that $\New(f)$ is a scaled standard simplex anymore.
All the polynomials treated in the cited literature are covered by the above theorems. \\

An important step to connect Theorem \ref{thm:main1} to geometric programming is given in the following 
theorem.

\begin{thm}
\label{thm:main2}
 Assume again that $f$ is an ST-polynomial in standard form and let $r\in \RR.$   Suppose
 that for every $(\alpha,j) \in \Delta(f) \times \{1,\ldots,n\}$ 
there exists an $a_{\alpha,j} \geq 0,$ such that:
\begin{enumerate}
\setcounter{enumi}{-1}
 \item If $\lambda_j^{(\alpha)}>0$, then  $a_{\alpha,j} > 0,$
 \item $|f_\alpha| \ \leq \ \prod\limits_{\substack{ j\in {\rm nz}(\alpha)\\ j\geq 1}} \left(\frac{a_{\alpha,j}}{\lambda_j^{(\alpha)}}\right)^{\lambda_j^{(\alpha)}}, 
     \text{ for every } \alpha \in \Delta(f) \text{ with } \lambda_0^{(\alpha)}=0,$
 \item $f_{\alpha(j)} \ \geq \ \sum\limits_{\alpha\in \Delta(f)} a_{\alpha,j} \text{ for all } j = 1,\ldots,n$
 \item $ f_0-r \ \geq \ \sum\limits_{\substack{\alpha \in \Delta(f) \\ 0 \in \rm nz(\alpha)}}  \lambda_0^{(\alpha)} |f_\alpha|^{1/\lambda_0^{(\alpha)}} 
     \prod\limits_{\substack{j\in {\rm nz}(\alpha) \\ j\geq 1}}  
         \left(\frac{\lambda_j^{(\alpha)}}{a_{\alpha,j}}\right)^{\lambda_j^{(\alpha)}/\lambda_0^{(\alpha)}}.$
\end{enumerate}
Then $f-r$ is a sum  of $|\Delta(f)|$ nonnegative circuit polynomials ({\sc sonc}) whose Newton polytopes are
 faces of $\conv(\{0\} \cup V(f))$.  
 \end{thm}

\begin{proof}
 First, note that the condition (0) is dispensable, provided that we assume the $a_{\alpha,j}$ to be chosen such that the other conditions are well-defined. In fact, by supposing this, condition (0) can be deduced from conditions (1) and (3).
Note that $f-r$ is an ST-polynomial again, since the right hand side of condition (3) is nonnegative. 
It is sufficient to show that the conditions given in the present theorem, which only considers $a_{\alpha,j}$
with $j\geq 1$, imply the existence of  
$a_{\alpha,j}$ as in Theorem \ref{thm:main1} (which also includes an $a_{\alpha,0}$), when the latter is formulated for $f-r$ in 
place of $f.$ As $f$ and $f-r$ differ  
in their constant term alone we observe that $\conv(\{0\} \cup V(f))=\conv(\{0\} \cup V(f-r))$ and $\Delta(f)=\Delta(f-r)$.

We fix an $\alpha\in \Delta(f-r)$ and investigate two cases:

Case $\lambda_0^{(\alpha)}=0$: Then $0\not\in {\rm nz}(\alpha)$. Hence, we have to consider condition (1) of Theorem \ref{thm:main2}. Since  $0\not\in {\rm nz}(\alpha)$ the additional assumption $j\geq 1$ under the product in condition (1) is obsolete.  This means that
the $\alpha$ under consideration satisfies condition (1) of Theorem \ref{thm:main1} independent of the choice of $a_{\alpha,0}$.

Case $\lambda_0^{(\alpha)}>0$: Then note that criterion (1) of Theorem \ref{thm:main1} is via solving for $a_{\alpha,0}$ equivalent to the inequality 
\begin{eqnarray}
 a_{\alpha,0} & \geq & \lambda_0^{(\alpha)} |f_\alpha|^{1/\lambda_0^{(\alpha)}} 
 \prod_{\substack{j\in {\rm nz}(\alpha) \\ j\geq 1}} 
  \left(\frac{\lambda_j^{(\alpha)}}{a_{\alpha,j}}\right)^{\lambda_j^{(\alpha)}/\lambda_0^{(\alpha)}}. \label{Equ:Proof1}
\end{eqnarray}
Once we have found $a_{\alpha,1},\ldots, a_{\alpha,n} \geq  0$ satisfying the conditions (2), (3) of the present theorem, 
there exists evidently the additional $a_{\alpha,0}>0$ to satisfy this inequality and hence (1) of Theorem \ref{thm:main1} for 
this $\alpha.$ 
The present conditions (2) coincide for all cases except $j=0$ with the conditions (2) in Theorem \ref{thm:main1}.
Condition (2) of Theorem \ref{thm:main1} for the current polynomial and $j=0$ requires us to find $a_{\alpha,0}$ such that
$f_0-r \geq \sum_{\alpha\in \Delta(f)} a_{\alpha,0}.$  
Now, as the $a_{\alpha,0}$ have to satisfy the inequality \eqref{Equ:Proof1} but are subject to no other conditions, we see that condition (3) guarantees what is required. 
\end{proof}

Finally, we prove the following theorem, which connects the conditions in the previous theorems to the convex cone of {\sc sonc}'s.

\begin{thm}\label{Thm:GPisSONC}
Let $f\in \RR[x]$ be an ST-polynomial in standard notation. We define:
\begin{itemize}
 \item $f_{\rm gp1}$ as the supremum of all $r\in \RR$  such that for every $\alpha \in \Delta(f)$ there exist nonnegative reals 
        $a_{\alpha,1},\ldots, a_{\alpha,n}$ such that the conditions (0) to (3) of Theorem \ref{thm:main2} are satisfied; and
 \item $f_{\rm gp2}$ as the supremum of all $r\in \RR$ such that there exist nonnegative circuit polynomials $g_1, g_2, \ldots, g_s$
whose Newton polytopes are faces of $\conv(\{0\} \cup V(f))$ such that $f-r =\sum_{k=1}^s g_k.$
\end{itemize}

Then these quantities are equal, i.e., $f_{\rm gp1}=f_{\rm gp2}.$
\end{thm}

\begin{proof}
Consider a real $r$ satisfying the conditions defining $f_{\rm gp1}.$ Then we know by Theorem \ref{thm:main2} that  there exist 
 $s=|\Delta(f)|$ nonnegative circuit polynomials whose  
Newton polytopes are faces of $\conv(\{0\} \cup V(f))$ such that
$f-r= \sum_{i=1}^s g_i.$ It follows that $f_{\rm gp2}\geq r$ and hence $f_{\rm gp2} \geq f_{\rm gp1}.$

Now consider a real $r$ satisfying the conditions defining $f_{\rm gp2}.$ Let $g_1,\ldots,g_s$ be circuit polynomials as
occurring in the equation. With $\alpha(g_k)$ denoting the tail monomial of $g_k,$ and obvious definitions of $g_{k,j},$  the equation reads in more detail
\begin{eqnarray*}
 f_0-r +\sum_{j=1}^n f_{\alpha(j)} x^{\alpha(j)} +\sum_{\alpha\in \Delta(f)} f_\alpha x^\alpha & = &
    \sum_{k=1}^s \underbrace{ ( g_{k,0}+ \sum_{j=1}^n g_{k,j}x^{\alpha(j)} +c_k x^{\alpha(g_k)})}_{=:g_k}.
\end{eqnarray*}

As $\Delta(f)$ and $V=\{0,\alpha(1),\ldots,\alpha(n)\}$ are disjoint and also $\{\alpha(g_1),\ldots,\alpha(g_s)\}$ and $V$
are disjoint, a comparison of coefficients of both sides of the equation implies that
\begin{eqnarray*}
 \{\alpha(g_i) \ : \ 1 \leq i \leq s \} & = & \Delta(f).
\end{eqnarray*}
We can assume that $s$ is minimal. Namely, if in this representation there would exist $l\neq l'$ such that
$\alpha(g_l)=\alpha(g_{l'}),$ then $g'=g_l+g_{l'}$ is a new nonnegative circuit polynomial with its Newton polytope being a face of $\conv(\{0\} \cup V(f)).$ We can use $g'$ to replace the subsum $g_l+g_{l'}$ above and obtain a representation with less than $s$ summands. This is a contradiction.

So, we henceforth assume that $s=|\Delta(f)|$ and that the circuit polynomials are indexed by $\alpha \in \Delta(f).$ If we set $g_l=g_\alpha$, then we have also $c_\alpha=f_\alpha$ and we get 
\begin{eqnarray*}
 f_0-r +\sum_{j=1}^n f_{\alpha(j)}x^{\alpha(j)}  +\sum_{\alpha\in \Delta(f)} f_\alpha x^\alpha & = &
    \sum_{\alpha\in \Delta(f)} \underbrace{ (g_{\alpha,0}+ 
         \sum_{j=1}^n g_{\alpha,j}x^{\alpha(j)} +f_\alpha x^{\alpha})}_{=:g_\alpha}.
\end{eqnarray*}
From this equality, the nonnegativity of circuit polynomials $g_\alpha,$ and Theorem \ref{thm:positiv} we get 
\begin{eqnarray}
 & & f_{\alpha(j)} \ = \ \sum_{\alpha\in \Delta(f)} g_{\alpha,j}, \quad  
f_0-r \ = \ \sum_{\alpha\in \Delta(f)} g_{\alpha,0}, \quad \text{ and } \quad 
|f_\alpha| \ \leq \ \prod_{j\in {\rm nz}(\alpha)} \left(\frac{ g_{\alpha,j} }{\lambda_j^{(\alpha)}}\right)^{\lambda_j^{(\alpha)}}. \label{Equ:Proof2}
\end{eqnarray}

From these equations and inequality, we arrange for each $\alpha \in \Delta(f)$ reals $a_{\alpha,j},$ with $j=1,\ldots,n,$  
satisfying the conditions of Theorem \ref{thm:main2}: we define for the cases $j=1,\ldots,n,$ $a_{\alpha,j}=g_{\alpha,j}.$ If $\lambda_j^{(\alpha)}>0$, then $j\in {\rm nz}(\alpha).$ Since $\alpha\in \Delta(f)$ we have $|f_\alpha|>0$. Thus, by the inequality in \eqref{Equ:Proof2}, also $g_{\alpha,j}>0.$ This shows that condition (0) is satisfied.
In the case $\lambda_0^{(\alpha)}=0,$ we have $0\not\in {\rm nz}(\alpha).$ In this case again the inequality guarantees (1).
We see that the condition (2) in Theorem \ref{thm:main2} is satisfied with equality. 

Finally, we investigate condition (3) of Theorem \ref{thm:main2}. For the case $0\in {\rm nz}(\alpha)$ we can solve the inequality for $g_{\alpha,0},$ obtaining that 
\begin{eqnarray*}
 g_{\alpha,0} & \geq & \lambda_0^{(\alpha)} |f_\alpha|^{1/\lambda_0^{(\alpha)}} \prod_{\substack{j\in {\rm nz}(\alpha) \\ j\geq 1}} \left(\frac{\lambda_j^{(\alpha)}}{a_{\alpha,j}}\right)^{\lambda_j^{(\alpha)}/\lambda_0^{(\alpha)}}.
\end{eqnarray*}
So, the equation for $f_0-r$ above implies condition (3) of Theorem \ref{thm:main2}.

Hence, a real $r$ satisfying the conditions defining $f_{\rm gp2}$  is also an $r$ such that there exist $a_{\alpha,1},\ldots, a_{\alpha,n}\geq 0$ satisfying the conditions of Theorem \ref{thm:main2}. Thus, $r \leq f_{\rm gp1},$ and consequently $f_{\rm gp2} \leq f_{\rm gp1}.$ This 
proves the theorem.
\end{proof}

In the following we make use of the unified notation $f_{\rm gp}$ corresponding to the equality $f_{\rm gp1}=f_{\rm gp2}.$
Ghasemi and Marshall observed a trade off between fast solvability of the corresponding geometric programs in comparison
to semidefinite programs and the fact that bounds obtained by geometric programs are
worse than $f_{\rm sos}$ for general polynomials $f$; \cite{GP}. Here, we conclude that this trade off does not
occur for polynomials with simplex Newton polytope satisfying the conditions of Theorem
\ref{thm:SOSMultiple}. Surprisingly, in this case the bound $f_{\rm gp}$ will be at least as good as the bound $f_{\rm sos}$.
Note that the special instance $\#\Delta(f) = 1$ and $\New(f)$ being the standard simplex with
edge length 2d was already observed by Ghasemi and Marshall; see \cite[Corollary 3.4]{GP}.

\begin{cor}\label{cor:gpbessersonc}
Let $f$ be an ST-polynomial in standard form and $\Delta(f) \subseteq \Int(\New(f)\cap \mathbb N^n)$. Suppose there exists $v\in (\R^*)^n$ such that
 $f_\alpha v^\alpha <0$ for all $\alpha\in \Delta(f).$
\begin{enumerate}
 \item Then $f_{\rm gp}=f^*.$
 \item If additionally $\Delta(f) \sbs \New(f)^*$, then $f_{\rm sos} = f^*.$
\end{enumerate}
\end{cor}

\begin{proof}
The polynomial $\tilde f=f-f^*$ is an ST-polynomial again which is nonnegative and has infimum $\tilde f^*=0.$ Clearly,
$\Delta(f)=\Delta(\tilde f)$. Thus, $\tilde f_\alpha v^{\alpha} = f_\alpha v^{\alpha} < 0$ for $\alpha\in \Delta(\tilde f).$ Hence, $\tilde f$ satisfies 
the hypotheses of Theorem \ref{thm:SOSMultiple}. Theorem \ref{thm:SOSMultiple} guarantees a {\sc sonc}-decomposition for $\tilde f$. Since  
$f - r$ is a polynomial that attains negative values for $r > f^*$ the polynomial $f - r$ cannot have a representation as a {\sc sonc}. 
In other words, from the definitions of $f_{\rm gp}$ and $f^*$ we get
\begin{eqnarray*}
	f_{\rm gp} & = & \sup\{r \ : \ f - r \text{ is a sonc }\} \ = \ f^*. 
\end{eqnarray*}
This shows the first statement.

For the second statement the proof is quite similar. Under the given additional hypothesis, we have 
$\Delta(f - f^*) \sbs \New(f - f^*)^*.$ Therefore, $f - f^*$ is a sum of binomial squares by Theorem \ref{thm:SBSMultiple}. If $r > f^*$, then $f - r$ assumes negative values and thus cannot be sum of squares. 
Hence, from the definitions of $f_{\rm sos}$ and $f^*$ we get
\begin{eqnarray*}
f_{\rm sos} & = & \sup\{r \ : \ f - r \mbox{ is a sos }\} \ = \ f^*.
\end{eqnarray*}
\end{proof}

\section{Geometric Programming}
\label{Sec:GeometricProgramming}
In this section, we prove that the number $f_{\rm gp}$ can indeed be obtained by a geometric program, which we introduce first.

\begin{definition}
A function $p : \mathbb R_{>0}^n\to \mathbb R$ of the form $p(z) = p(z_1,\ldots,z_n) = cz_1^{\alpha_1}\cdots z_n^{\alpha_n}$ with $c > 0$ and $\alpha_i \in \mathbb R$ is called a monomial (function). A sum $\sum_{i=0}^k c_iz_1^{\alpha_{1}(i)}\cdots z_n^{\alpha_{n}(i)}$ of monomials with $c_i > 0$ is called a posynomial (function).
\label{Def:GP}
\end{definition}

A \emph{geometric program} has the following form.
\begin{eqnarray}
\label{Equ:GP}
\begin{cases} 
\text{minimize} & p_0(z), \\ 
\text{subject to:} &  
\begin{array}{cl}
 (1) & p_i(z) \leq 1 \ \text{ for all } \ 1\leq i\leq m, \\
 (2) & q_j(z) = 1 \ \text{ for all } \ 1\leq j\leq r, \\
\end{array}
\end{cases}
\end{eqnarray}
where $p_0,\dots,p_m$ are posynomials and $q_1,\dots,q_r$ are monomial functions.

Geometric programs can be solved with interior point methods. In \cite{nesterov}, the authors prove  worst-case polynomial time complexity of this method. For an introduction and practical abilities of geometric programs see \cite{Boyd:GP, Boyd:CO}. Based on our main results, Theorems \ref{thm:main1} and \ref{thm:main2}, we can conclude the following corollary.

\begin{cor}\label{cor:gp}
Let $f\in \R[x]$ be an ST-polynomial of degree $2d$ in standard form. Let $R$ be the subset of an $n |\Delta(f)|$-dimensional real space given by
\begin{eqnarray*}
 R & = & \{(a_{\alpha,i}) \ : \ a_{\alpha,i} \in \R_{> 0} \text{ for every } (\alpha,i)\in \Delta(f)\times \{1,\ldots,n\}\}.
\end{eqnarray*}
Then $f_{\rm gp}=f_0-m^*,$ where $m^*$ is given as the output of the following geometric program:
\begin{eqnarray*}
\begin{cases}
\emph{minimize} & \sum\limits_{\substack{\alpha\in \Delta(f) \\ \lambda_0^{(\alpha)} \neq 0}}\lambda_0^{(\alpha)} |f_\alpha|^{1/\lambda_0^{(\alpha)}} 
    \prod\limits_{\substack{ j\in {\rm nz}(\alpha)\\ j\geq 1}}
  \left(\frac{\lambda_j^{(\alpha)}}{a_{\alpha,j}}\right)^{\lambda_j^{(\alpha)}/\lambda_0^{(\alpha)}}
\emph{ over the subset } R' \emph{ of } R \\
& \\
\emph{defined by} &
\begin{array}{cl}
 (1) & \sum\limits_{\alpha\in \Delta(f)} (a_{\alpha,j}/f_{\alpha(j)} ) \leq 1 \emph{ for every } 1 \leq j \leq n. \\
 (2) & |f_\alpha| \prod\limits_{\substack{j\in {\rm nz}(\alpha)\\ j\geq 1}}
                    \left(\frac{\lambda_j^{(\alpha)}}{a_{\alpha,j}}\right)^{ \lambda_j^{(\alpha)}} \leq 1 \emph{ for every } \alpha\in \Delta(f) \emph{ with } \lambda_0^{(\alpha)}=0. \\
\end{array}
\end{cases}
\end{eqnarray*}
\end{cor}
 
\begin{proof}
Let $M((a_\alpha))$ denote the function to minimize. From the definitions it follows that the above qualifies as a geometric program, since $M((a_\alpha))$ is a posynomial in the variables $a_{\alpha,j}$ for $(\alpha,j)\in \Delta(f)\times \{1,\ldots,n\}$. Moreover, the functions entering in the constraint inequalities are posynomials  in the same variables; in fact the functions in constraint (1) are even linear.
By Theorem \ref{Thm:GPisSONC}, $f_{\rm gp}=\sup\{r \ : \ f_0-r \text{ satisfies the conditions of Theorem \ref{thm:main2}}\}.$  We see that 
\begin{eqnarray*}
 f_{\rm gp} & = & \sup\{r \ : \ f_0-r=M((a_\alpha)), \text{ and the } (a_{\alpha}) \text{ satisfy the inequalities defining } R'\}  \\
 & = & \sup\{f_0-M((a_\alpha)) \ : \ \text{the } (a_{\alpha}) \text{ satisfy the inequalities defining } R'\} \\
 & = & f_0-\inf\{M((a_\alpha)) \ : \ \text{the } (a_{\alpha}) \text{ satisfy the inequalities defining } R'\},
\end{eqnarray*}
which completes the proof. 
\end{proof}

\subsection{Examples}
\label{SubSec:Examples}

We demonstrate our method and reflect our results by five examples. All following geometric programs are solved via the \textsc{Matlab} solver \textsc{gpposy}\footnote[1]{The
Matlab version used was R2011a, running on a desktop computer with Intel(R) Core(TM)2 @ 2.33 GHz and 2 GB of RAM.}.
\begin{enumerate}
 \item  First, consider the polynomial $f = \frac{1}{4} + x_1^8 + x_1^2x_2^6 + 4x_1^3x_2^3$.
The geometric program proposed in \cite{GP} is infeasible, since the pure power $x_2^8$ is missing in the polynomial to make the Newton polytope a standard simplex of edge length $8$. However, $\New(f)$ is an $H$-simplex and we can use our results to compute $f_{\rm gp}$. Here, we have $\Delta = \{\alpha\} = \{(3,3)\}$. So, we introduce the variables $a_{\alpha,j}$ for $j\in\{1,2\}$. Therefore, by Corollary \ref{cor:gp}, we have to solve the following geometric program:
\begin{eqnarray*}
& & \inf\left\{\frac{1}{4} \cdot 4^4 \cdot \left(\frac{1}{4}\right)^{\frac{4}{4}} \cdot \left(\frac{1}{2}\right)^{\frac{4}{2}} \cdot a_{\alpha,1}^{-1} a_{\alpha,2}^{-2} \ : \ a_{\alpha,1}\leq 1, a_{\alpha,2}\leq 1\right\}.
\end{eqnarray*}
The optimal solution is given by $a_{\alpha,1} = a_{\alpha,2} = 1$ yielding $m^* = 4$ and hence $f_{\rm gp} = \frac{1}{4} - 4  = -3.75 = f_{\rm sos} = f^*$ by Corollary \ref{cor:gpbessersonc}.\\
\item Let $f = \frac{187}{208} + x_1^{80}+x_2^{78}-8x_1^5x_2^3$. Again, the geometric program proposed in \cite{GP} is infeasible. However, $\New(f)$ is an $H$-simplex and with $\lambda^{(5,3)}_1 = 1/16$ and $\lambda^{(5,3)}_2 = 1/26$ for the convex combination of the interior exponent. Thus, our corresponding geometric program is given by
\begin{eqnarray*}
& & \inf\left\{\frac{187}{208}\cdot 8^{\frac{208}{187}}\cdot \left(\frac{1}{16}\right)^{\frac{13}{187}}\cdot \left(\frac{1}{26}\right)^{\frac{8}{187}}\cdot a_{\alpha,1}^{-\frac{13}{187}}\cdot a_{\alpha,2}^{-\frac{8}{187}} \ : \ a_{\alpha,1}\leq 1, a_{\alpha,2}\leq 1\right\}.	
\end{eqnarray*}
Using the software \textsc{Gloptipoly}, see \cite{gloptipoly}, $f^* \approx -5.6179$ was computed in $4327.2$ seconds, i.e., approximately $\mathbf{1.2}$ \textbf{hours}. In contrast, using the geometric program of Corollary \ref{cor:gp}, we get a global minimizer $a_{\alpha,1} = a_{\alpha,2} = 1$ and the optimal solution $m^* = \frac{187}{208}\cdot \left(\frac{8^{208}}{16^{13} \cdot 26^8}\right)^\frac{1}{187}$ and, hence, $f^* = \lambda_0 - m^* = \frac{187}{208} \cdot \left(1 - \left(\frac{8^{208}}{16^{13} \cdot 26^8}\right)^\frac{1}{187}\right) \approx -5.6179$ in $\mathbf{0.5}$ \textbf{seconds}.\\
\item Let now $f = \frac{17}{20} + 3x_1^8x_2^4 + 2x_1^6x_2^8 - 10x_1^3x_2^3 + x_1^5x_2^4$. Again, the geometric program in \cite{GP} cannot be used but the geometric program in Corollary $\ref{cor:gp}$ with $\Delta = \{\overline{\alpha},\alpha\} = \{(3,3),(5,4)\}$ now reads as follows.
\begin{eqnarray*}
& & \inf \left\{\frac{9}{1250}\cdot 2^{\frac{1}{3}}\cdot 5^{\frac{2}{3}}\cdot a_{\alpha,1}^{-\frac{4}{3}}\cdot a_{\alpha,2}^{-1} + \frac{11}{40}\cdot 10^{\frac{3}{11}}\cdot 3^{\frac{9}{11}}\cdot 20^{\frac{8}{11}}({a_{\overline{\alpha},1}})^{-\frac{3}{11}}({a_{\overline{\alpha},2}})^{-\frac{6}{11}}\right\} \\
& & \text{such that } \ \frac{a_{\alpha,1}+a_{\overline{\alpha},1}}{3} \, \leq \, 1 \ \text{ and } \ \frac{a_{\alpha,2}+a_{\overline{\alpha},2}}{2} \, \leq \, 1.
\end{eqnarray*}
Here, the variables $a_{\alpha,j}$ come from $\alpha = (5,4)$ and $a_{\overline{\alpha},j}$ come from $\overline{\alpha} = (3,3)$.
Again, we use the \textsc{Matlab} solver \textsc{gpposy} to solve this geometric program with the following code:

\begin{verbatim}
>> A0=[-4/3,-1,0,0;0,0,-3/11,-6/11]
>> A1=[1,0,0,0;0,0,1,0]
>> A2=[0,1,0,0;0,0,0,1]
>> A=[A0;A1;A2]
>> b0=[9/1250*2^(1/3)*5^(2/3);11/40*10^(3/11)*3^(9/11)*20^(8/11)]
>> b1=[1/3;1/3]
>> b2=[1/2;1/2]
>> b=[b0;b1;b2]
>> szs=[size(A0,1);size(A1,1);size(A2,1)]
>> [x,status,lambda,nu]=gpposy(A,b,szs)
\end{verbatim}

The optimal solution is given by $$(a_{\alpha,1},a_{\alpha,2},a_{\overline{\alpha},1},a_{\overline{\alpha},2}) \ = \ (0.5910,0.1685,2.4090, 1.8315)$$
yielding
$m^* \approx -6.644$ and hence $f_{\rm gp} = \frac{17}{20} - 6.644 \approx - 5.794$. We also have $f_{\rm sos} =  f_{\rm gp} = f^*$ by Corollary \ref{cor:gpbessersonc}.\\

\item The Motzkin polynomial $f = \frac{1}{3} + \frac{1}{3}x_1^4x_2^2 + \frac{1}{3}x_1^2x_2^4 - x_1^2x_2^2$ satisfies $f_{\rm gp} = f^* = 0$ again by Corollary \ref{cor:gpbessersonc}. However, $f_{\rm sos} = -\infty$.\\

\item Let $f=\frac{5}{12} + \frac{5}{24}x_1^6 + \frac{5}{24}x_1^2x_2^4 + \frac{5}{24}x_1^2x_2^2 - \frac{5}{8}x_1x_2$. Note that this $f$ is not even ST in view of the square $\frac{5}{24}x_1^2x_2^4$ and $(2,2)^T$ being contained
in the interior of its Newton polytope. However, one can still define a GP in the sense of Corollary \ref{cor:gp}. In this case one can check that
$$f_{\rm gp} \ \approx \ -0.41 \ < \ f_{\rm sos} \ = \ f^* \ \approx \ 0.196.$$
\end{enumerate}

\section{Applications to Constrained Polynomial Optimization}
\label{Sec:Constrained}

Based on our previous results we can derive some initial applications for constrained polynomial optimization. We follow the set-up in \cite{GP:semialgebraic}, which we recall here first. Let $g_1,\dots, g_s \in \R[x] = \R[x_1, \dots, x_n]$ and consider the semialgebraic set
$$K \ = \ \{x \in \mathbb R^n : g_i(x) \geq 0,\,\, 1\leq i\leq s\}.$$
We consider the constrained polynomial optimization problem 
$$f_K^* \ = \ \inf_{x \in K}^{}f(x).$$
For a polynomial $f \in \R[x]$, in order to derive lower bounds for $f_K^*$, we define a new function given by
$$h(\mu) \ = \ f - \sum_{j=1}^{s} \mu_jg_j$$
for $\mu = (\mu_1,\dots,\mu_s) \in  [0,\infty)^s$. Note that for every fixed $\mu$ the function $h(\mu)$ is a polynomial in $\R[x]$. In \cite{GP:semialgebraic} Ghasemi and Marshall use a method first developed in their paper \cite{GP} to obtain a lower bound for $h(\mu)$ via geometric programming for every given $\mu$. We denote this bound as $h(\mu)_{GM}$ in order to distinguish it from our bound given by geometric programming, which we introduced in the previous sections. Let  $\mathbf{g} = (g_1,\dots ,g_s)$. Since there exists a bound $h(\mu)_{GM}$ for every choice $\mu \in  [0,\infty)^s$, we can take the supremum $s(f,\mathbf{g})_{GM}$ and obtain
$$s(f,\mathbf{g})_{GM} \ = \ \sup\{h({\mu})_{GM} : \mu \in  [0,\infty)^s\} \ \leq \ f_K^*.$$
Ghasemi and Marshall's approach does not allow in general to compute $s(f,\mathbf{g})_{GM}$ via a geometric program directly, but they give an optimization problem \cite[program (2) on page 4]{GP:semialgebraic}, which can be relaxed to a geometric program; see \cite[Theorem 4.1. and 4.2. (1)]{GP:semialgebraic}. Here, we show that a similar optimization problem can be formulated in our setting.\\

Assume that for a given $\mu \in [0,\infty)^s$ the Newton polytope $\New(h(\mu))$ is a simplex with even vertex set $\{0, \alpha(1),\dots,\alpha(n)\} \subset (2\N)^n$. The results in Sections \ref{Sec:MainResults} and \ref{Sec:GeometricProgramming} imply that $h(\mu)_{gp}$ is a lower bound for $h(\mu)$ on $\mathbb R^n$, which also implies that it is a lower bound for $f$ on the semialgebraic set $K$. Namely, let, again,   $\mathbf{g} = (g_1,\dots ,g_s)$, and let $h(\mu)_{gp}$ be the optimal value of the geometric program introduced in Section \ref{Sec:GeometricProgramming} for the polynomial $h(\mu)$. Then we obtain similarly as in \cite{GP:semialgebraic}
$$s(f,\mathbf{g}) \ = \ \sup\{h(\mu)_{gp} : \mu \in [0,\infty)^s\} \ \leq \ f_K^*.$$
Conveniently, write $h(\mu) = -\sum_{j=0}^s \mu_jg_j$ for $g_0 = -f$ and $\mu_0 = 1$ and define $\Delta(h(\mu))$ in the usual sense as the set of exponents of $h(\mu)$ not defining a monomial square. Moreover, we define $\Delta(h) = \Delta(f)\cup\Delta(-g_1)\cup\dots\cup\Delta(-g_s)$. Note that $\Delta(h(\mu)) \subseteq \Delta(h)$ for all $\mu$.

We remark that for a given $\mu$ the geometric program from Section \ref{Sec:GeometricProgramming} is feasible only if $h(\mu)$ is an ST-polynomial and hence particularly only if $\New(h(\mu))$ is a simplex. If this is not the case, then we set $h(\mu)_{gp} = -\infty$. Note in this context that the support as well as the Newton polytope of $h(\mu)$ can change if certain $\mu_j$ equal $0$ or if term cancellation occurs. Additionally, we can assume that all $-g_j$ do not contain monomial squares; see \cite[Section 3, second paragraph]{GP:semialgebraic}. Analogously as in the previous sections, we denote by $\{\lambda_0^{(\alpha,\mu)},\dots,\lambda_n^{(\alpha,\mu)}\}$ the barycentric coordinates of the lattice point $\alpha \in (\New(h(\mu))\cap \mathbb N^n)$ with respect to the vertices of the simplex $\New(h(\mu))$.
For every $\alp \in \Delta(h)$ we define a set
\begin{eqnarray*}
 R_{\alp} & = & \{\mathbf{a}_{\alp} \ : \ \mathbf{a}_{\alp} = (a_{\alp,1},\ldots,a_{\alp,n}) \in \R_{>0}^n\}.
\end{eqnarray*}
Furthermore, we define the nonnegative  real set $R$ as
\begin{eqnarray*}
R & = & [0,\infty)^s \times \bigtimes_{\alp \in \Delta(h)} (R_\alp \times \R_{\geq 0}).
\end{eqnarray*}
Hence, $R$ is the Cartesian product of $[0,\infty)^s$ and $|\Delta(h)|$ many copies $\R_{>0}^n \times \R_{\geq 0}$; each given by one $R_{\alp}$ with $\alp \in \Delta(h)$ and a $\R_{\geq 0}$. We define the function $p$ from $R$ to $\R_{\geq 0}$ as
\begin{eqnarray*}
 & & p(\mu,\{(\mathbf{a}_\alp,b_\alp) \ : \ \alp \in \Delta(h)\}) \ = \ \\
 & & \sum_{j=1}^{s} \mu_jg_j(0) + 
 \sum_{\substack{\alpha \in \Delta(h) \\ \lambda_0^{(\alpha,\mu)} \neq 0}} \lambda_0^{(\alpha,\mu)} \cdot b_{\alpha}^{\frac{1}{\lambda_0^{(\alpha,\mu)}}}\cdot \prod_{\substack{j\in {\rm nz}(\alpha) \\ j\geq 1}} \left(\frac{\lambda_j^{(\alpha,\mu)}}{a_{\alpha,j}}\right)^{\frac{\lambda_j^{(\alpha,\mu)}}{\lambda_0^{(\alpha,\mu)}}}
\end{eqnarray*}
where, analogously as before, $h(\mu)_{\alpha}$ denotes the coefficient of the term with exponent $\alpha$ of $h(\mu)$. \\

We consider the following optimization problem:

\begin{eqnarray}
& & \begin{cases}
\text{minimize} &
p(\mu,\{(\mathbf{a}_\alp,b_\alp) \ : \ \alp \in \Delta(h)\})
\text { over the subset of } R
\\
& \\
\text{defined by:} & 
\begin{array}{cl}
 (1) & \sum\limits_{\alpha\in\Delta(h)} a_{\alpha,j} \, \leq \, h(\mu)_{\alpha(j)} \ \text{ for all } \ 1 \leq j \leq n \ \text{ and } \\
 (2) & \prod\limits_{\substack{j\in {\rm nz}(\alpha)\\ j\geq 1}} \left(\frac{a_{\alpha,j}}{\lambda_j^{(\alpha,\mu)}}\right)^{\lambda_j^{(\alpha,\mu)}} \, \geq \,  |h(\mu)_{\alpha}| 
\begin{array}{l}
 \ \text{for every } \alpha\in \Delta(h)\\
 \ \text{with } \lambda_0^{(\alpha,\mu)}=0 \\
\end{array} \\
(3) & |h(\mu)_{\alpha}| \leq b_\alp
 \begin{array}{l}
 \ \text{for every } \alpha\in \Delta(h)\\
 \ \text{with } \lambda_0^{(\alpha,\mu)}\neq0 \\
\end{array} \\
\end{array}
\end{cases}
\label{Equ:Constrained}
\end{eqnarray}
Note that for a given $\mu \in [0,\infty)^s$ constraint (3) allows to choose $b_\alp = 0$ if $\alp \in \Delta(h)\setminus\Delta(h(\mu))$. We remark that constraint (3) is only necessary in order to be able to transform this program into a geometric program under certain conditions; see Theorem \ref{Thm:GPConstrainedCaseSignomial}. Namely, without the constraint (3) we would have to replace the term $b_{\alpha}^{\frac{1}{\lambda_0^{(\alpha,\mu)}}}$ by $|h(\mu)_{\alpha}|^{\frac{1}{\lambda_0^{(\alpha,\mu)}}}$ in $p$. But $|h(\mu)_{\alpha}|^{\frac{1}{\lambda_0^{(\alpha,\mu)}}}$ is not a posynomial in general.

A key observation is that the optimal value of this optimization problem yields a lower bound for $s(f,\mathbf{g})$. This is an analogue result to \cite[Theorem 3.1.]{GP:semialgebraic}.

\begin{thm}\label{Thm:GPConstrainedCase}
Let $\gamma$ be optimal value of the above optimization problem. Then $f_0 - \gamma \leq s(f,\mathbf{g})$.
\end{thm}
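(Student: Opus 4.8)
The plan is to show that any feasible point $(a_{\alpha,j},\mu)$ of the optimization problem~\eqref{Equ:Constrained} produces, for the \emph{fixed} polynomial $h(\mu)$, a feasible point of the geometric program of Corollary~\ref{cor:gp} applied to $h(\mu)$, and that the two objective values agree up to the additive constant $\sum_{j=1}^s\mu_j g_j(0) = f_0 - h(\mu)_0$. From this it will follow that $h(\mu)_{gp} \geq h(\mu)_0 - \bigl(\gamma - \sum_{j=1}^s \mu_j g_j(0)\bigr) = f_0 - \gamma$ for the optimizing $\mu$, and hence $s(f,\mathbf g) = \sup_{\mu}h(\mu)_{gp} \geq f_0 - \gamma$, which is the claim.

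First I would fix an optimal (or near-optimal) feasible point $(a_{\alpha,1},\dots,a_{\alpha,n})_{\alpha},\mu$ of~\eqref{Equ:Constrained} and unwind the constraints. The constraint $\sum_{\alpha\in\Delta(h)}a_{\alpha,j}\leq h(\mu)_{\alpha(j)}$ is exactly condition (2) of Theorem~\ref{thm:main2} for the polynomial $h(\mu)$ (recall $h(\mu)_{\alpha(j)}$ plays the role of $f_{\alpha(j)}$ and is positive by the standing assumption that $\New(h(\mu))$ is a simplex with these vertex coefficients). The constraint $\prod_{j=1}^n (a_{\alpha,j}/\lambda_j^{(\alpha)})^{\lambda_j^{(\alpha)}} \geq |h(\mu)_{\alpha}|$ for $|\alpha|=2d$ is precisely condition (1) of Theorem~\ref{thm:main2}. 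Finally, condition (3) of Theorem~\ref{thm:main2} for $h(\mu)$ with the shift $r$ reads $h(\mu)_0 - r \geq \sum_{\alpha\in\Delta^{<2d}(h)} \lambda_0^{(\alpha)}|h(\mu)_\alpha|^{1/\lambda_0^{(\alpha)}}\prod_{j=1}^n(\lambda_j^{(\alpha)}/a_{\alpha,j})^{\lambda_j^{(\alpha)}/\lambda_0^{(\alpha)}}$; taking $r$ as large as possible, $h(\mu)_{gp} \geq h(\mu)_0 - \sum_{\alpha\in\Delta^{<2d}(h)}\lambda_0^{(\alpha)}|h(\mu)_\alpha|^{1/\lambda_0^{(\alpha)}}\prod_j(\lambda_j^{(\alpha)}/a_{\alpha,j})^{\lambda_j^{(\alpha)}/\lambda_0^{(\alpha)}}$.

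Next I would assemble these inequalities. Since $h(\mu)_0 = f_0 - \sum_{j=1}^s\mu_j g_j(0)$ (the constant term of $h(\mu) = f - \sum_j \mu_j g_j$), the displayed lower bound for $h(\mu)_{gp}$ becomes $f_0 - \Bigl(\sum_{j=1}^s \mu_j g_j(0) + \sum_{\alpha\in\Delta^{<2d}(h)}\lambda_0^{(\alpha)}|h(\mu)_\alpha|^{1/\lambda_0^{(\alpha)}}\prod_j(\lambda_j^{(\alpha)}/a_{\alpha,j})^{\lambda_j^{(\alpha)}/\lambda_0^{(\alpha)}}\Bigr) = f_0 - p(a_{\alpha,1},\dots,a_{\alpha,n},\mu)$. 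Hence $h(\mu)_{gp} \geq f_0 - p(\cdots) \geq f_0 - \gamma$, where the last step uses that $\gamma$ is the minimal value of $p$ over all feasible points. Passing to the supremum over $\mu$ gives $s(f,\mathbf g) \geq f_0 - \gamma$ as desired. One routine point to address along the way: the open-positivity requirement $(a_{\alpha,j})\in\R_{>0}^n$ in~\eqref{Equ:Constrained} versus the closed $\R_{\geq 0}^n$ in Corollary~\ref{cor:gp} and the convention $a_{\alpha,j}=0\Leftrightarrow\lambda_j^{(\alpha)}=0$; one handles this exactly as in~\cite[Remark~2.4]{GP} by restricting $j$ to those indices with $\lambda_j^{(\alpha)}>0$, so this causes no difficulty.

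The main obstacle is bookkeeping rather than conceptual: one must be careful that for \emph{every} $\mu\in\R_{>0}^s$ the Newton polytope $\New(h(\mu))$ genuinely is the fixed simplex $\New(h)$ with the same vertex set — this is why the paragraph preceding the theorem insists on $\mu_j>0$ and on $\Omega(-g_j)=\Delta(-g_j)$ — and that $\Delta^{<2d}(h)$, $\Delta(h)$ and the convex-combination coefficients $\lambda_j^{(\alpha)}$ are genuinely independent of $\mu$, so that the optimization problem~\eqref{Equ:Constrained} is well-posed and the reduction to Theorem~\ref{thm:main2} for each individual $h(\mu)$ is legitimate. Once that is in place, the argument is a direct term-by-term matching of the constraints and objective of~\eqref{Equ:Constrained} with conditions (1)--(3) of Theorem~\ref{thm:main2}, together with the definition of $h(\mu)_{gp}$ and of $s(f,\mathbf g)$.
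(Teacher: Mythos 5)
Your proposal is correct and follows essentially the same route as the paper: a feasible point of \eqref{Equ:Constrained} with $\mu$ fixed is a feasible point of the geometric program for $h(\mu)$ (equivalently, it satisfies conditions (1)--(3) of Theorem \ref{thm:main2}), so its objective value bounds $h(\mu)(0)-h(\mu)_{gp}$ from above, and the identity $h(\mu)(0)=f_0-\sum_{j}\mu_j g_j(0)$ turns this into $h(\mu)_{gp}\geq f_0-p(a,\mu)\geq f_0-\gamma$, whence $s(f,\mathbf{g})\geq f_0-\gamma$. Your extra remarks on the $\R_{>0}$ versus $\R_{\geq 0}$ convention and on the $\mu$-independence of $\New(h)$ and the $\lambda_j^{(\alpha)}$ are points the paper leaves implicit but do not change the argument.
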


\begin{proof} 
Consider a fixed $\mu$ such that the optimization problem \eqref{Equ:Constrained} restricted to this $\mu$ is feasible. Choose $b_\alp = |h(\mu)_\alp|$ and apply Corollary \ref{cor:gp} to the polynomial in $x \in \R^n$, $h(\mu) = h(\mu)(x)$. Then 
$$\sum_{\substack{\alpha \in \Delta(h) \\ \lambda_0^{(\alpha,\mu)} \neq 0}}^{}\lambda_0^{(\alpha,\mu)} \cdot b_\alp^{\frac{1}{\lambda_0^{(\alpha,\mu)}}}\cdot \prod_{\substack{j\in {\rm nz}(\alpha)\\ j\geq 1}} \left(\frac{\lambda_j^{(\alpha,\mu)}}{a_{\alpha,j}}\right)^{\frac{\lambda_j^{(\alpha,\mu)}}{\lambda_0^{(\alpha,\mu)}}}$$
is an upper bound for $\gamma(h(\mu)) = h(\mu)(0) - h(\mu)_{gp}$. Note that $|b_\alp| = 0$ for all $\alp \in \Delta(h) \setminus \Delta(h(\mu))$. Hence, for a fixed $\mu$ every feasible point $(\mu,\{(\mathbf{a}_\alp,b_\alp) \ : \ \alp \in \Delta(h)\})$ of
\eqref{Equ:Constrained}
$$f_0 - \sum_{j=1}^{s} \mu_jg_j(0) - \sum_{\substack{\alpha \in \Delta(h) \\ \lambda_0^{(\alpha,\mu)} \neq 0}}^{}\lambda_0^{(\alpha,\mu)} \cdot b_\alp^{\frac{1}{\lambda_0^{(\alpha,\mu)}}}\cdot \prod_{\substack{j\in {\rm nz}(\alpha)\\ j\geq 1}} \left(\frac{\lambda_j^{(\alpha,\mu)}}{a_{\alpha,j}}\right)^{\frac{\lambda_j^{(\alpha,\mu)}}{\lambda_0^{(\alpha,\mu)}}}$$
is a lower bound for $h(\mu)_{gp} = h(\mu)(0) - \gamma(h(\mu))$. It follows that $f_0 - \gamma$ is a lower bound for $s(f,\mathbf{g})$.

\end{proof}

Although $h(\mu)_{gp}$ is a geometric program, this is not true in general for the optimization problem \eqref{Equ:Constrained}. Actually, \eqref{Equ:Constrained} is not even a \textit{signomial program}, i.e., a program defined as a geometric program except that the nonnegativity condition of the coefficients dropped. This is due to the absolute sign $|h(\mu)_\alp|$ in condition (3) depending on the choice of $\mu$. Only if additional assumptions are satisfied one can guarantee that \eqref{Equ:Constrained} is a signomial or geometric program, as we show in the following theorem. However, in our follow-up paper \cite{Dressler:Iliman:deWolff:GPConstrained} we examine conditions on how \eqref{Equ:Constrained} can be turned into signomial programs and subsequently relaxed to geometric programs in order to find lower bounds for $f_K^*$.

\begin{thm}
The optimization problem \eqref{Equ:Constrained} restricted to $\mu \in (0,\infty)^s$ is a signomial program if for every $\alpha$ it holds that $h(\mu)_\alpha$ has the same sign for every choice of $\mu$. If additionally for every $j = 1,\ldots,n$ it holds that $h(\mu)_{\alp(j)}$ is a monomial, all coefficients $h(\mu)_{\alpha(j)}$ corresponding to the vertices $\{\alpha(1),\ldots,\alpha(n)\}$ of $\New(h(\mu))$ are strictly positive, all $h(\mu)_\alpha$ are strictly positive, and all $g_j(0)$ for $j = 1,\ldots,s$ are greater or equal than zero, then \eqref{Equ:Constrained} is a geometric program.
\label{Thm:GPConstrainedCaseSignomial}
\end{thm}

\begin{proof}
If we restrict \eqref{Equ:Constrained} to $\mu \in (0,\infty)^s$, then all involved functions are almost signomials by definition, since all the variables $\mu_1,\ldots,\mu_s$ and $a_{\alpha,1},\ldots,a_{\alpha,n}$ for all $\alpha \in \Delta(h)$ are real and strictly positive. The only remaining issue is the absolute value of $|h(\mu)_\alpha|$ in the constraints (2) and (3). Since $h(\mu)_\alpha = \mu_0 (g_0)_\alpha +
\mu_1 (g_1)_\alpha + \cdots + \mu_s (g_s)_\alpha$, the sign of $h(\mu)_\alpha$ in general depends on $\mu$ such that we cannot express the absolute value unless $\mu$ is fixed. But if the sign of $h(\mu)_\alpha$ is constant for every choice of $\mu$, then we can write $|h(\mu)_\alpha|$ as $\pm h(\mu)_\alpha$, that is $\pm(\mu_0 (g_0)_\alpha +
\mu_1 (g_1)_\alpha + \cdots + \mu_s (g_s)_\alpha)$ in the constraints (2) and (3). This turns the constraints (2) and (3) into signomials or even posynomials if additionally all $h(\mu)_\alpha$ have positive coefficients. Hence, the first part follows with the definition of a signomial and Definition \ref{Def:GP}. In order to obtain an expression of the form that a posynomial is $\leq 1$ in constraint (1) the $h(\mu)_\alp(j)$ needs to be a \textit{monomial}, which the left hand side can be divided by. While this is not true in general, this condition is guaranteed by the assumptions of the theorem. Moreover, the sums $h(\mu)_{\alpha(j)}$ for $j = 1,\ldots,n$ and $\sum_{j = 1}^s \mu_j g_j(0)$ may not involve a negative sign in order to be a posynomial. This also is satisfied by assumption. All remaining terms were already considered in the first part and hence the second part follows.
\end{proof}

We close the section with an example showing the capabilities of our program compared to the corresponding one by Ghasemi and Marshall in \cite{GP:semialgebraic}.

\begin{example}
Let $m(x,y) = 1 + x^4y^2 + x^2y^4 - 3x^2y^2$ be the Motzkin polynomial. Assume that we would like to minimize the Motzkin polynomial on the semi-algebraic set
\begin{eqnarray*}
 K & = & \{(x,y) \in \R^2 \ : \ x \cdot y \geq 0\}.
\end{eqnarray*}
In other words, $K = \R^2_{\geq 0} \cup \R^2_{\leq 0}$. By construction it follows that $h(\mu)(x,y) = m(x,y) - \mu \cdot x y$. Since $m(x,y)$ has global minimizers $(1,1)$ and $(-1,-1)$ we have $m^* = m^*_{|K}$. For $\mu = 0$ we have $h(0)(x,y) = m(x,y)$ and thus $h(0)_{gp} = m_{\rm gp}$. By example (4) in Section \ref{SubSec:Examples} we know $m_{\rm gp} = m^* = 0$. Moreover, if we restrict the optimization program \eqref{Equ:Constrained} applied on $h(\mu)$ to the case $\mu = 0$, then it coincides with our geometric program from Corollary \ref{cor:gp} applied on $m(x,y)$. This implies $m_{\rm gp} \leq 1 - \gamma$ where $\gamma$ denotes the optimal value of the optimization program \eqref{Equ:Constrained}. Hence, we obtain in total
\begin{eqnarray*}
 & & 0 \ \leq \ m_{\rm gp} \ = \ h(0)_{gp} \ \leq \ 1 - \gamma \ \leq \ s(m,xy) \ \leq \  m^*_{|K} \ = \ m^* \ = \ m_{\rm gp} \ = \ 0.
\end{eqnarray*}
Thus, we can conclude
\begin{eqnarray*}
 & & 1 - \gamma \ = \ s(m,xy) \ = \ m^*_{|K} \ = \ m^* \ = \ 0.
\end{eqnarray*}

Now, we try to apply the program proposed by Ghasemi and Marshall \cite[Program (2) before Theorem 3.1.]{GP:semialgebraic} on the same problem. Since the term $xy$ corresponds to an interior point in the Newton polytope of $\New(m)$, we have $\New(h(\mu)) = \New(m)$ for every choice of $\mu$. But $\New(m)$ is not a scaled standard simplex. Hence, the program by Ghasemi and Marshall is infeasible for all choices of $\mu$ and therefore it is infeasible in total.
\end{example}

\section{Conclusion and Outlook}
We have proposed a new geometric program providing lower bounds for polynomials that significantly extends the existing one in \cite{GP}. This extension sheds light to the crucial structure of the Newton polytope of polynomials. In particular, our results serve as a next step in optimization of polynomials with simplex Newton polytopes and connect this problem to
\begin{enumerate}
	\item the recently established {\sc sonc} nonnegativity certificates, and
	\item the construction of simplices with an interesting lattice point structure, namely, what we have called $H$-simplices in this article.
\end{enumerate}
In \cite{GP} Ghasemi and Marshall observed a trade off between bounds based on geometric programming $f_{\rm gp}$ and bounds based on semidefinite programming $f_{\rm sos}$. The bound $f_{\rm sos}$ was better but its computation took far longer. This trade off cannot be observed in our refinement. While fast solvability of the geometric programs still seems to hold, the bounds $f_{\rm gp}$ and $f_{\rm sos}$ are not comparable in general. There exist classes for which $f_{\rm gp}\leq f_{\rm sos}$ holds \cite{GP}, but there also exist classes with $f_{\rm gp}\geq f_{\rm sos}$ (Corollary \ref{cor:gpbessersonc}). The latter is a crucial case due to the fast solvability of geometric programs. It would be interesting to discover more classes for which the bounds are comparable. Hence, an analysis of the gap $f_{\rm sos} - f_{\rm gp}$ is an important task having major impact on computational complexity of solving polynomial optimization problems. Equivalently, looking from a convex geometric viewpoint, it would be interesting to analyze the gap between the cone of sums of squares and the cone of sums of nonnegative circuit polynomials as well as the gap between the cone of nonnegative polynomials and the cone of sums of nonnegative circuit polynomials.

Furthermore, we showed that the methods developed in this paper cannot only be applied to global but also to constrained polynomial optimization problems. In the follow-up paper \cite{Dressler:Iliman:deWolff:GPConstrained} we discuss the constrained case and particularly the optimization problem \eqref{Equ:Constrained} in more detail, thereby extending results in Section \ref{Sec:Constrained} and results in \cite{GP:semialgebraic}. We will also connect these problems again to the geometry of the convex cone of {\sc sonc}'s.

\section*{Acknowledgments}

\noindent We are deeply grateful to Alexander Kovacec for his detailed comments and suggestions, which significantly improved the presentation of the paper. We also thank Mareike Dressler and Kaitlyn Philipson for their helpful comments and the anonymous referees for their various suggestions and helpful comments.\\

\noindent The second author was partially supported by DFG grant MA 4797/3-2.

\bibliographystyle{amsalpha}
\bibliography{gp}

\end{document}